\newtheorem{proposition}{Proposition}
\newtheorem{theorem}{Theorem}
\newtheorem{question}{Question}
\begin{document}
\title{Bounding Klarner's constant from above using a simple recurrence}
\author{Vuong Bui\thanks{LIRMM, Universit\'e de Montpellier, CNRS, 161 Rue Ada, 34095 Montpellier, France and UET, Vietnam National University, Hanoi, 144 Xuan Thuy Street, Hanoi 100000, Vietnam  (\href{mailto://bui.vuong@yandex.ru}{\texttt{bui.vuong@yandex.ru}})\\
Part of the work was supported by the Deutsche Forschungsgemeinschaft
(DFG) Graduiertenkolleg ``Facets of Complexity'' (GRK 2434).}}

\date{}
\maketitle
\begin{abstract}
Klarner and Rivest showed that the growth of the number of polyominoes, also known as Klarner's constant, is at most $2+2\sqrt{2}<4.83$ by viewing polyominoes as a sequence of twigs with appropriate weights given to each twig and studying the corresponding multivariate generating function. In this short note, we give a simpler proof by a recurrence on an upper bound. In particular, we show that the number of polyominoes with $n$ cells is at most $G(n)$ with $G(0)=G(1)=1$ and for $n\ge 2$,
\[
    G(n) = 2\sum_{m=1}^{n-1} G(m)G(n-1-m).
\]
It should be noted that $G(n)$ has multiple combinatorial interpretations in literature.
\end{abstract}

\section{Introduction}
Polyomino is a popular geometric figure that must have been used either for practical or recreational purposes for a long time, either implicitly or explicitly. Rigorously speaking, a polyomino is an edge-connected set of cells on the square lattice.
A nice introduction and survey of polyominoes can be found in the chapter ``Polyominoes'' of ``Handbook of Discrete and Computational Geometry'' \cite{barequet2017polyominoes}.
We work with fixed polyominoes in this article, in the sense that two polyominoes are equivalent if one is a translate of the other. The number of polyominoes with $n$ cells can be found in the sequence $A001168$ of The On-Line Encyclopedia of Integer Sequences \cite{oeis}. The study of the asymptotic growth of polyominoes starts with Klarner's paper \cite{klarner1967cell}, which observes that the number $A(n)$ of (fixed) polyominoes with $n$ cells is supermultiplicative. Together with the boundedness of $\sqrt[n]{A(n)}$ (by $A(n)\le \binom{3n}{n-1}$, see \cite{eden1961two}), we have a corollary that the growth constant is actually the limit
\[
    \lambda \coloneqq \lim_{n\to\infty} \sqrt[n]{A(n)},
\]
which is also called Klarner's constant.
Another corollary is the lower bound that for every $n$,
\[
    \lambda\ge\sqrt[n]{A(n)}.
\]
In particular, with the recent advance 
\[
    A(70)=18,500,792,645,885,711,270,652,890,811,942,343,400,814
\]
in \cite{barequet2024counting},
we have $\lambda\ge \sqrt[70]{A(70)} > 3.76049$. 
However, the state of the art for the lower bound is $\lambda > 4.0025$, by a related notion of twisted cylinders \cite{barequet2016lambda}. The lower bound is quite close to the (unproved) estimate $4.0625696\pm 0.0000005$ by Jensen \cite{jensen2003counting}.

It turns out that obtaining upper bounds on $\lambda$ seems to be harder, and the bounds are in general far from the believed value. The first bound is due to Eden \cite{eden1961two} by an enumeration so that each polyomino can be seen as sequence of twigs:
\[
    A(n)\le \binom{3n}{n-1},
\]
which is followed by
\[
    \lambda\le \frac{27}{4}=6.75.
\]
Klarner and Rivest \cite{klarner1973procedure} optimized the approach by enumerating polyominoes with a set of $5$ twigs, reduced from $8$ twigs as in Eden's implementation, so that we obtain immediately the bound $\lambda\le 5$. This bound can be improved by assigning appropriate weights to the twigs and studying the corresponding multivariate generating function, by which they obtain
\[
    \lambda\le 2+2\sqrt{2}<4.83.
\]
Also in the same article, Klarner and Rivest extended the approach with more than $2$ million twigs to obtain
\[
    \lambda \le 4.649551.
\]
Using the power of computation nowadays with thousands of billion twigs and some additional tricks, Barequet and Shalah \cite{barequet2022improved} obtained
\[
    \lambda \le 4.5252.
\]

In general, it seems to be very hard to obtain a good upper bound without using huge computational power, even if we optimize certain steps. Therefore, it may be interesting to look at the best upper bound that we can prove manually without extensive computation: $\lambda\le 2+2\sqrt{2}$. Although the treatment in \cite{klarner1973procedure} gives a general framework, we will prove the bound using a simpler and more straightforward way, by a recurrence of an upper bound on $A(n)$. This allows a simple investigation of an ordinary generating function, as opposed to multivariate generating functions in \cite{klarner1973procedure}. Usually the approach is to bound the quantity we need to study by another quantity which is more well behaved, e.g., it satisfies some recurrence. This is a popular technique, for example, one can find an instance in \cite{barequet2019improved}, or a more general application of the technique in \cite{barequet2021concatenation}. A merit of this work is that instead of working directly on the original quantity $A(n)$, we study the number of pairs of a polyomino and a cell so that they satisfy some certain properties, which involve the neighbors of the cell in the polyomino. On the one hand, although the number of such pairs may be linearly many times larger than the original quantity, both still grow at the same rate. On the other hand, by considering the neighbors, we may have more useful recurrences. In the next section, we show that $A(n)$ is at most $G(n)$, which is defined by $G(0)=G(1)=1$ and for $n\ge 2$,
\[
    G(n) = 2\sum_{m=1}^{n-1} G(m)G(n-1-m).
\]
In fact, $G(n)$ for $n=1,2,\dots$ is the sequence $A071356$ of The On-Line Encyclopedia of Integer Sequences \cite{oeis}, which has multiple interpretations. For example, $G(n)$ is the number of Motzkin's paths from $(0,0)$ to $(n,0)$ where the level and up steps are bicolored \cite{shapiro2009bijection}.
The sequence can be related to other convolutions of similar types, a familiar one being Catalan's number.
The proof that $A(n)\le G(n)$ involves decomposing a polyomino into smaller parts that still guarantee some properties.
It leads to the following natural question.
\begin{question}
    Is there a simple injection from polyominoes to Motzkin's paths mentioned above (or any other equivalent combinatorial interpretation of $G(n)$)?
\end{question}

\section{The recurrence}
We prove that $\lambda\le 2+2\sqrt{2}$ by considering pairs $(P,c)$ where $P$ is a polyomino of $n$ cells and $c$ is one of its cells. 

\begin{figure}[h]
    \centering
    \includegraphics[width=0.44\textwidth]{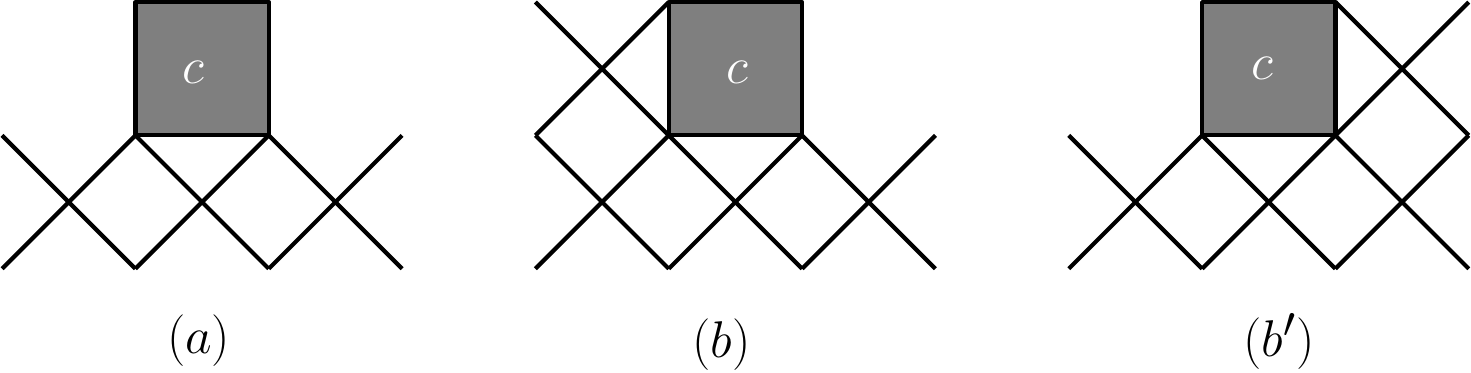}
    \caption{Two types of cells}
    \label{fig:definitions}
\end{figure}

Let $f(n)$ be the number of pairs such that there is no cell in the row below the marked cell $c$ that lies in the same column or an adjacent column, i.e., there is no cell at crossed positions if the marked cell is the black cell in Fig. \ref{fig:definitions}(a). We say that such a pair has Type $(A)$.

Let $g(n)$ be the number of pairs with the same forbidden cells together with the forbidden cell to the left of the marked cell, as in Fig. \ref{fig:definitions}(b). We say that such a pair has Type $(B)$.

Note that $f(n)$ and $g(n)$ are still the same if we rotate or flip the forbidden cells around the marked cells.
For example, if we let the additional forbidden cell be the cell to the right of the marked cell instead, as in Fig. \ref{fig:definitions}(b'), the number of pairs is still $g(n)$.

Before bounding $f(n)$ and $g(n)$ from above, we note that although $f(n)$ and $g(n)$ are both upper bounds on the number $A(n)$ of polyominoes of $n$ cells by considering pairs $(P,c)$ where $c$ is the leftmost cell in the bottommost row of $P$.
Also, all these three numbers have the same growth constant since 
\[
    A(n)\le f(n)\le nA(n),\qquad A(n)\le g(n)\le nA(n).
\]

Let $F(n)$ and $G(n)$ be defined by: $F(0)=F(1)=G(0)=G(1)=1$ and for $n\ge 2$,
\begin{align*}
F(n) &= G(n) + \sum_{\ell,m\ge 1,\,\ell+m=n} G(\ell) G(m),\\
G(n) &= F(n-1)+G(n-1)+\sum_{\ell,m\ge 1,\, \ell+m=n-1} G(\ell) G(m).
\end{align*}

We obtain upper bounds on $f(n)$ and $g(n)$ as follows.

\begin{theorem}
Setting $f(0)=g(0)=1$, we have
$f(n)\le F(n)$ and $g(n) \le G(n)$ for all $n$.
\end{theorem}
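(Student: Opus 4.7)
I would proceed by strong induction on $n$, establishing the two inequalities $f(n)\le F(n)$ and $g(n)\le G(n)$ simultaneously; the bases $n\in\{0,1\}$ are immediate. In the inductive step, the three summands of the recurrence for $G(n)$ suggest a three-case decomposition of a Type $(B)$ pair $(P,c)$ according to which of the allowed neighbours $c_A$ (above) and $c_R$ (right) lie in $P$, while the two summands of the recurrence for $F(n)$ suggest a two-case decomposition of a Type $(A)$ pair $(P,c)$ according to whether $c_L$ lies in $P$.

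Suppose $(P,c)$ is a Type $(B)$ pair with $n\ge 2$. If only $c_A\in P$, I would remove $c$ and promote $c_A$ to be the marked cell: the three cells below $c_A$ are $c_L$, $c$, and $c_R$, which are absent by the Type $(B)$ condition, the removal, and the case hypothesis respectively, giving a Type $(A)$ pair of size $n-1$ and a contribution of at most $F(n-1)$. If only $c_R\in P$, I would remove $c$ and promote $c_R$: the four cells at positions $\{\text{left, above-left, below-left, below}\}$ of $c_R$ are now absent (being $c$, $c_A$, and the two forbidden below-neighbours of $c$ respectively), which after a $90^\circ$ rotation becomes exactly the L-shape of the ``right-forbidden'' variant of Type $(B)$; by the symmetry remark this variant is still counted by $g$, yielding a contribution of at most $G(n-1)$. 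If both $c_A, c_R\in P$, I would try to split $P$ at $c$: in the clean sub-case where $P\setminus\{c\}$ breaks into two components, the $c_R$-component gives a rotated Type $(B)$ pair by the same rotation argument, and the $c_A$-component (possibly after a further analogous internal split) yields a Type $(B)$ pair, altogether contributing $\le\sum_{\ell+m=n-1}G(\ell)G(m)$; in the sub-case where $P\setminus\{c\}$ remains connected, a further edge cut (for instance at a bridge internal to $P\setminus\{c\}$) is needed to force the two-piece decomposition.

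For a Type $(A)$ pair $(P,c)$ with $n\ge 2$: if $c_L\notin P$ then $(P,c)$ is already Type $(B)$ and contributes at most $G(n)$; if $c_L\in P$, the Type $(A)$ condition at $c$ already forces the cells below $c_L$ and below $c$ to be absent, so I would sever $P$ along the edge $\{c_L,c\}$ into a left piece (marked at $c_L$) and a right piece (marked at $c$), each becoming a Type $(B)$ pair with sizes summing to $n$ and contributing $\le\sum_{\ell+m=n}G(\ell)G(m)$. The hard part will be making these splits fully rigorous when the intended cut is not a bridge. The rotational and reflective symmetries of the forbidden-cell pattern, coupled with the specific absent-cell arrangement forced by the Type $(A)$/$(B)$ condition at $c$, are the crucial geometric tools that let the rotation argument convert an ``L-shape of four absent cells'' into a genuine (possibly rotated) Type $(B)$ pair; the most delicate analysis will be required for the connected sub-case of ``both $c_A, c_R\in P$''.
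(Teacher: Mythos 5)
Your proposal matches the paper's proof essentially step for step: the same simultaneous strong induction, the same three-case analysis of a Type $(B)$ pair according to which of the neighbours above and to the right of $c$ lie in $P$ (yielding the terms $F(n-1)$, $G(n-1)$, and $\sum_{\ell+m=n-1}G(\ell)G(m)$), the same two-case analysis of a Type $(A)$ pair on the left neighbour (yielding $G(n)$ via the freshly proved bound on $g(n)$, plus $\sum_{\ell+m=n}G(\ell)G(m)$), and the same use of the rotation/flip symmetry to recognise the promoted cell as a marked cell of a (variant) Type $(B)$ pair. The splitting step you flag as delicate is treated no more explicitly in the paper, which simply asserts that $P$ minus $c$ can be split into the two marked parts.
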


\begin{proof}
The relations are trivial for $n=0,1$. Suppose that they are true up to $n-1$, the same will be proved for $n$ as follows.

\begin{itemize}
\item 
The number of pairs $(P,c)$ of Type $(B)$ with no cell to the right of $c$ is at most $f(n-1)\le F(n-1)$, see Fig. \ref{fig:B-expansions}(a). The number of pairs with no cell immediately above $c$ is at most $g(n-1)\le G(n-1)$, see Fig. \ref{fig:B-expansions}(b). If there are two cells, one cell $d$ to the right and the other $e$ immediately above $c$, we can split $P$ after excluding $c$ into two parts $P_1$ and $P_2$ of some $\ell$ and $m$ cells ($\ell,m\ge 1,\, \ell+m=n-1$) such that $(P_1,d)$ and $(P_2,e)$ are both of Type $(B)$ (after rotating/flipping appropriately), see Fig. \ref{fig:B-expansions}(c). Hence,
\begin{equation}\label{ineq:g-G}
    g(n) \le F(n-1)+G(n-1)+\sum_{\ell,m\ge 1,\, \ell+m=n-1} G(\ell) g(m) = G(n).
\end{equation}

\begin{figure}[h]
    \centering
    \includegraphics[width=0.50\textwidth]{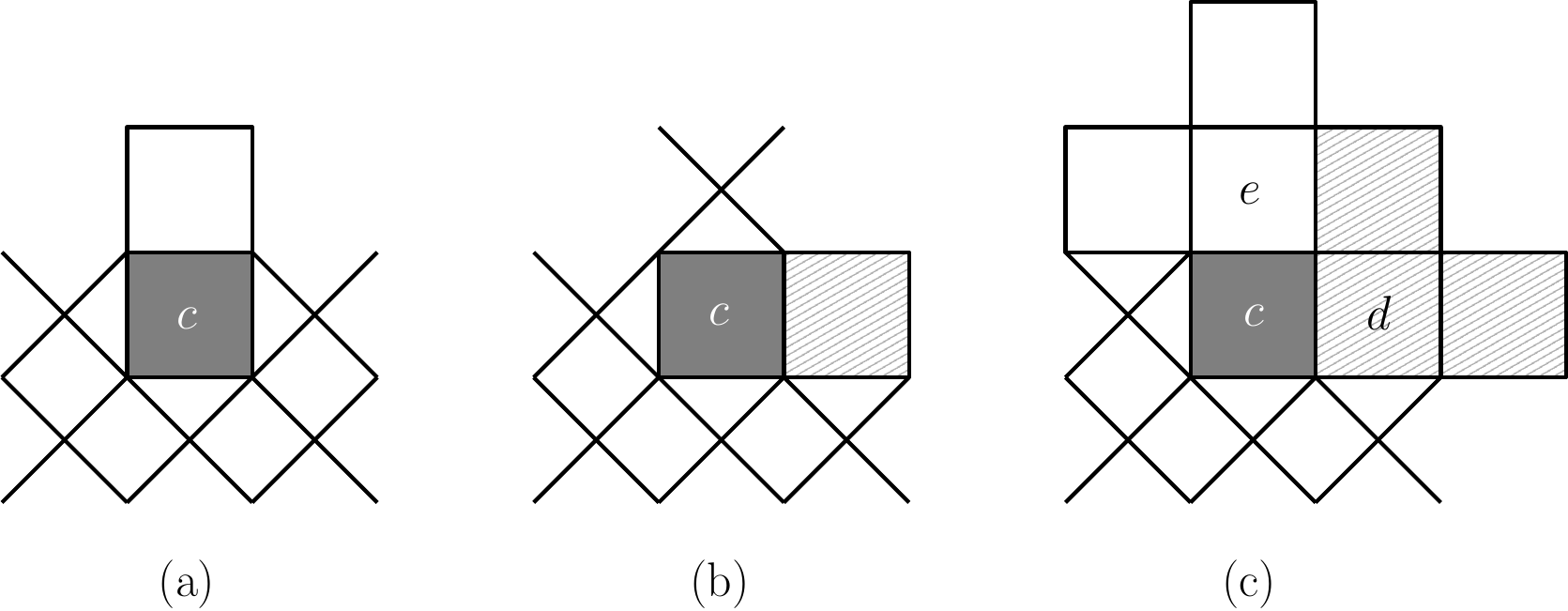}
    \caption{Neighbors for cells of Type (B)}
    \label{fig:B-expansions}
\end{figure}

\item
The number of pairs $(P,c)$ of Type $(A)$ with no cell to the left of $c$ is at most $g(n)\le G(n)$, see Fig. \ref{fig:A-expansions}(a). (Note that $g(n)\le G(n)$ is not due to the induction hypothesis but \eqref{ineq:g-G}.) If there is a cell $d$ to the left of $c$, we can split $P$ into two parts $P_1$ and $P_2$ of some $\ell$ and $m$ cells ($\ell,m\ge 1,\, \ell+m=n$) such that $(P_1,c)$ and $(P_2,d)$ are both of Type $(B)$ (after rotating/flipping appropriately), see Fig. \ref{fig:A-expansions}(b). Hence,
\[
    f(n) \le G(n) + \sum_{\ell,m\ge 1,\,\ell+m=n} G(\ell) G(m) = F(n).
\]

\begin{figure}[h]
    \centering
    \includegraphics[width=0.32\textwidth]{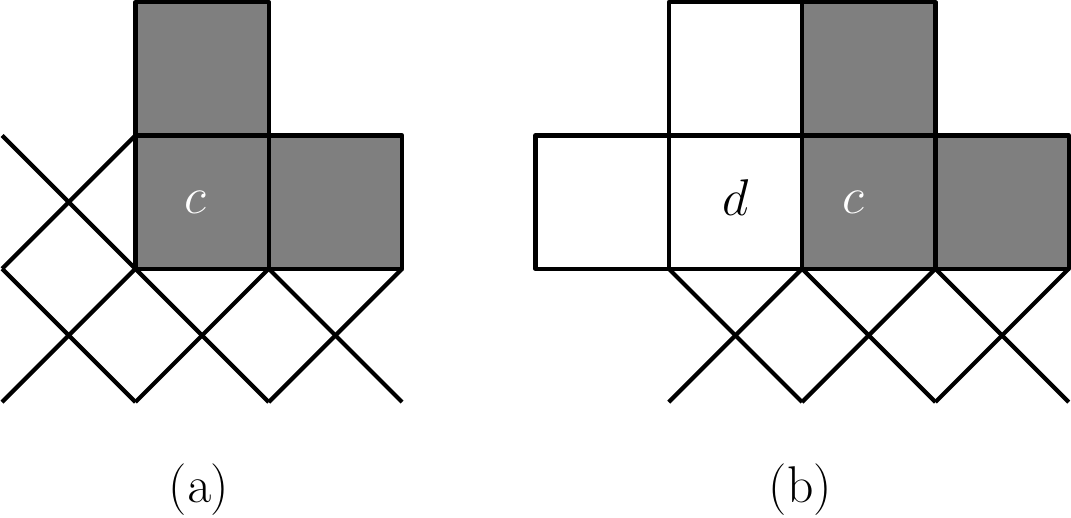}
    \caption{Neighbors for cells of Type (A)}
    \label{fig:A-expansions}
\end{figure}
\end{itemize}
The conclusion is therefore valid for every $n$ by induction.
\end{proof}

In fact, $G(n)$ can be written so that it depends on itself as follows.
\begin{proposition}
For $n\ge 2$,
\[
    G(n) = 2\sum_{m=1}^{n-1} G(m)G(n-1-m).
\]
\end{proposition}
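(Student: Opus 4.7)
The plan is to unfold the two mutually recursive definitions of $F$ and $G$ into a single recurrence for $G$ alone. Specifically, the definition gives
\[
G(n) = F(n-1) + G(n-1) + \sum_{\substack{\ell,m\ge 1\\ \ell+m=n-1}} G(\ell)G(m),
\]
and if I substitute $F(n-1) = G(n-1) + \sum_{\ell,m\ge 1,\,\ell+m=n-1} G(\ell)G(m)$ on the right-hand side, the terms pair up and I obtain
\[
G(n) = 2G(n-1) + 2\sum_{\substack{\ell,m\ge 1\\ \ell+m=n-1}} G(\ell)G(m).
\]

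Next I would absorb the lone term $2G(n-1)$ into the convolution using $G(0)=1$, writing $G(n-1) = G(n-1)G(0)$. Relaxing the lower end of one summation variable from $1$ to $0$ accounts for exactly this extra term, so that
\[
G(n) = 2\sum_{m=1}^{n-1} G(m)G(n-1-m),
\]
where the $m=n-1$ term of the sum contributes the $G(n-1)G(0) = G(n-1)$ that was sitting outside. That is the claimed identity.

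The only subtle point, and really the only obstacle worth naming, is that the defining formula for $F(n)$ is only given for $n\ge 2$, so the substitution of $F(n-1)$ looks illegal when $n=2$. However, that formula continues to hold at $n=1$ in the vacuous sense: the sum over $\ell,m\ge 1$ with $\ell+m=1$ is empty, and $G(1)=1 = F(1)$. I would record this observation explicitly at the start of the argument so that the substitution above is valid for every $n\ge 2$. Apart from this small bookkeeping check and care with the summation limits when shifting the convolution range, the proof is a direct algebraic manipulation of the recursive definitions.
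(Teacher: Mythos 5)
Your proposal is correct and follows essentially the same route as the paper: substitute the defining expression for $F(n-1)$ into the recurrence for $G(n)$, observe the terms double, and absorb $G(n-1)=G(n-1)G(0)$ into the convolution by extending the index range. Your extra remark that the formula for $F$ holds vacuously at $n=1$ (so the substitution is legitimate when $n=2$) is a sound bookkeeping point that the paper leaves implicit.
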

\begin{proof}
Indeed,
\begin{align*}
    G(n)&= F(n-1)+G(n-1)+\sum_{\ell,m\ge 1,\, \ell+m=n-1} G(\ell) G(m)\\
    &= G(n-1) + \sum_{\ell,m\ge 1,\,\ell+m=n-1} G(\ell) G(m) + G(n-1) + \sum_{\ell,m\ge 1,\,\ell+m=n-1} G(\ell) G(m) \\
    &= 2\left(G(n-1) + \sum_{\ell,m\ge 1,\,\ell+m=n-1} G(\ell) G(m)\right)\\
    &= 2\sum_{m=1}^{n-1} G(m)G(n-1-m).\qedhere
\end{align*}
\end{proof}

We proceed with calculating the generating function
\[
    \zeta(n)=\sum_{n\ge 0} G(n) x^n.
\]
\begin{proposition}
\[
    \zeta(x)=\frac{2x+1 - \sqrt{1-4x-4x^2}}{4x}.
\]
\end{proposition}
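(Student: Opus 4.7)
The plan is to translate the recurrence in Proposition~1 into a functional equation for $\zeta(x)$, solve the resulting quadratic, and then select the branch of the square root that matches the initial conditions.

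First I would rewrite the recurrence in a convolution-friendly form. Setting $k = n-1$, the identity $G(n) = 2\sum_{m=1}^{n-1} G(m) G(n-1-m)$ becomes, for $k \ge 1$,
\[
    G(k+1) = 2\bigl([x^k]\zeta(x)^2 - G(k)\bigr),
\]
since $\sum_{m=0}^{k} G(m)G(k-m) = [x^k]\zeta(x)^2$ and the $m=0$ (resp.\ $m=k$) term of this sum equals $G(k)$ (using $G(0)=1$). I would then multiply by $x^{k+1}$ and sum over $k \ge 1$, carefully bookkeeping the low-order terms: the left-hand side becomes $\zeta(x) - 1 - x$, the $[x^k]\zeta(x)^2$ sum becomes $x(\zeta(x)^2 - 1)$, and the $G(k)$ sum becomes $x(\zeta(x)-1)$. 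After cancellations I expect to arrive at the clean quadratic
\[
    2x\,\zeta(x)^2 - (1+2x)\,\zeta(x) + (1+x) = 0.
\]

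Next I would apply the quadratic formula. The discriminant simplifies via $(1+2x)^2 - 8x(1+x) = 1 - 4x - 4x^2$, yielding the two candidate solutions
\[
    \zeta(x) = \frac{(1+2x) \pm \sqrt{1-4x-4x^2}}{4x}.
\]
To select the correct sign I would check the behavior at $x = 0$: the power series $\zeta(x)$ must satisfy $\zeta(0) = G(0) = 1$, and expanding $\sqrt{1-4x-4x^2} = 1 - 2x + O(x^2)$ shows that only the minus sign gives a finite value at $0$ (and in fact yields $1$). The plus sign would produce a pole at $x = 0$, which is incompatible with $\zeta$ being a formal power series. This gives the stated formula.

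I do not anticipate a substantial obstacle; the only place requiring care is the bookkeeping of the low-order terms when passing from the coefficient recurrence to the generating-function equation (making sure the convolution $\zeta(x)^2$ is adjusted for the missing $m=0$ and $m=k$ contributions, and that the shift by one in the index of $G$ is tracked correctly). Once the quadratic is set up, the remainder is routine algebra plus a single branch choice.
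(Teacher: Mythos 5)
Your proposal is correct and follows essentially the same route as the paper: both rewrite the recurrence as a full convolution minus the extra $G(n-1)$ term, sum against $x^n$ to obtain the quadratic $2x\zeta(x)^2-(2x+1)\zeta(x)+(x+1)=0$, and select the root that is finite (equal to $1$) at $x=0$. The index shift $k=n-1$ is only a cosmetic difference in bookkeeping.
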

\begin{proof}
For $n\ge 2$,
\[
    G(n)=\left(2\sum_{m=0}^{n-1} G(m)G(n-1-m)\right) - 2G(n-1),
\]
therefore,
\begin{align*}
\zeta(x)&=1+x+\sum_{n\ge 2} G(x)x^n\\
&=1+x+\left(\sum_{n\ge 2} \sum_{m=0}^{n-1} 2G(m)G(n-1-m)x^n\right) - \sum_{n\ge 2}2 G(n-1)x^n\\
&=1+x+2x([\zeta(x)]^2 - 1)-2x(\zeta(x)-1)\\
&=1+x+2x[\zeta(x)]^2-2x\zeta(x).
\end{align*}
Out of the two solutions of the equation
\[
2x[\zeta(x)]^2 - (2x+1)\zeta(x)+(x+1)=0
\]
of $\zeta(x)$, we choose the solution in the conclusion since it is the one that gives $\lim_{x\to 0} \zeta(x) = 1$. (The other one $\frac{2x+1+\sqrt{1-4x-4x^2}}{4x}$ diverges as $x\to 0$.)
\end{proof}

Since the condition for the discriminant $1-4x-4x^2$ to be nonnegative is $(-\sqrt{2}-1)/2\le x\le (\sqrt{2}-1)/2$, the radius of convergence is $(\sqrt{2}-1)/2$. The growth constant of $G(n)$ is therefore $2/(\sqrt{2}-1)=2+2\sqrt{2}$, which in turn is also an upper bound on Klarner's constant. 

Although we are mainly interested in a simpler proof of the result of Klarner and Rivest in this article, we close the article with the following question.
\begin{question}
    Can we turn the approach into a more general framework as in the work of Klarner and Rivest?
\end{question}

\bibliographystyle{unsrt}
\bibliography{recurpoly}

\end{document}